\documentclass[10pt]{article}%
\usepackage{placeins}
\usepackage{epsf,amsfonts,amsmath,amsthm}
\usepackage{graphicx}
\usepackage{amsmath}
\usepackage{caption}
\usepackage{amsthm}
\usepackage{microtype}
\oddsidemargin .25in \topmargin -.25in \textheight 8.5in
\textwidth 6.in
\title{ Stability estimates for Interior Penalty D.G. Methods for the Nonlinear Dynamics of the complex Ginzburg-Landau equation.}
\author{ Dimitrios Kostas\footnotemark[1]}
\begin{document}
\date{}
\maketitle
\renewcommand{\thefootnote}{\fnsymbol{footnote}}
\footnotetext[1] {Mathematics subject classification:65M60,65M12,35K55 }
\footnotetext[1] {The work completed 
National Technical University of Athens, Zografou Campus, Athens 15784, Greece.
jimkosta@central.ntua.gr
Now ind. researcher}
\newcounter{fgh}[section]
\def\thefgh{\thesection.\arabic{fgh}}
\newtheorem{thm}[fgh]{Theorem}
\newtheorem{prop}[fgh]{Proposition}
\newtheorem{lem}[fgh]{Lemma}
\newtheorem{defn}[fgh]{Definition}
\newtheorem{cor}[fgh]{Corollary}
\newtheorem{rmk}[fgh]{Remark}
\newtheorem{ass}[fgh]{Assumption}
\renewcommand{\theequation}{\thesection.\arabic{equation}}
\newcommand{\bu}{{\bf u}}
\newcommand{\bv}{{\bf v}}
\newcommand{\bw}{{\bf w}}
\newcommand{\bg}{{\bf g}}
\newcommand{\bz}{{\bf z}}
\newcommand{\by}{{\bf y}}
\newcommand{\bs}{{\bf s}}
\newcommand{\bbf}{{\bf f}}
\newcommand{\bbP}{{\bold \mathcal P}}
\newcommand{\bbQ}{{\bold \mathcal Q}}
\newcommand{\bx}{{\bf x}}
\newcommand{\bphi}{{\mbox{\boldmath $\phi$}}}
\newcommand{\blambda}{{\mbox{\boldmath $\lambda$}}}
\begin{abstract}
This study investigates the complex Landau equation, a reaction diffusion system with applications in nonlinear optics and fluid dynamics. The equation's nonlinear imaginary component introduces rich dynamics and significant computational challenges. We address these challenges using Discontinuous Galerkin (DG) finite element methods. A rigorous stability analysis and a comparative study are performed on three distinct DG schemes: Symmetric Interior Penalty Galerkin (SIPG), Non-symmetric Interior Penalty Galerkin (NIPG), and Incomplete Interior Penalty Galerkin (IIPG). These methods are compared in terms of their stability and computational efficiency.Our numerical analysis and computational results demonstrate that all three discontinuous Galerkin (DG) schemes are stable. However, the Symmetric Interior Penalty Galerkin (SIPG) scheme proves to be the most robust, as its norm remains bounded even in the presence of nonlinear terms. A property not shared by the others. A comparison between the Incomplete Interior Penalty Galerkin (IIPG) and Non-symmetric Interior Penalty Galerkin (NIPG) schemes shows that IIPG has superior stability properties. For high values of the penalty parameter, all methods exhibit similar stability behavior. Our results highlight the suitability of DG methods for simulating complex nonlinear reaction-diffusion systems and provide a practical framework for selecting the most efficient scheme for a given problem.
\end{abstract}
\section{Introduction}
The  complex Landau equation, a fundamental model in Nonlinear dynamics, has received limited attention in the existing literature regarding its nonlinear dynamics, with only a handful of studies addressing this topic.To the best of our knowledge, the application of Discontinuous Galerkin (DG) methods-specifically the Symmetric (SIPG), Nonsymmetric (NIPG), and Incomplete (IIPG) Interior Penalty Galerkin methods to the complex Landau equation remains unexplored.This study addresses the critical gap by introducing and analyzing these three Discontinuous Galerkin formulations for the complex Landau equation. Our work establishes novel stability estimates for the proposed schemes and demonstrates their convergence behavior through detailed numerical and theoretical analysis.

In this section, we review key results that form the theoretical and computational foundation of our work. The Discontinuous Galerkin (DG) framework has become an established tool in computational mathematics, frequently employed due to its favorable convergence properties and capacity to maintain low numerical errors across diverse problem classes. The fundamental theoretical and computational basis for DG methods is well-documented. A rigorous and comprehensive theoretical foundation, including the Symmetric Interior Penalty Galerkin (SIPG) formulation, is provided by Di Pietro and Ern \cite{[6]}, which also offers detailed stability analyses for both elliptic and hyperbolic partial differential equations. Further foundational insight is provided by the monograph of Cockburn et al. \cite{[5]}, which furnishes a detailed account of the mathematical principles underpinning interior penalty DG methods, specifically addressing stability, consistency, error estimation, and approximation properties. 

Advancements in the DG methodology include extensions to complex physical systems. For practical, time-dependent implementations, Akman et al. \cite{[3]} demonstrate the effectiveness of SIPG when applied to optimal control problems governed by unsteady convection diffusion equations. The efficiency and scalability of high-order DG discretizations have been addressed through the development of hybrid multigrid preconditioners for the incompressible Navier-Stokes equations. Complementarily, the combination of DG discretization with a reduced basis approach has been explored for fluid dynamics problems with geometrically parametrized domains, yielding substantial computational savings via model order reduction \cite{[22]}.

The foundational formulation of the interior penalty (IP) method utilizing discontinuous elements was established by Arnold in his seminal work \cite{[2]}, providing the theoretical genesis for subsequent developments, including the widely adopted SIPG, Nonsymmetric (NIPG), and Incomplete (IIPG) Interior Penalty Galerkin schemes. The flexibility of DG methods in handling complex domains is further evidenced by its successful application in non-overlapping domain decomposition strategies for the incompressible Navier-Stokes equations \cite{[10]}. Furthermore, efforts to ensure computational reliability and efficiency have driven the development of A-posteriori error estimation techniques for DG discretizations of elliptic problems \cite{[14]}, culminating in robust adaptive frameworks that offer convergence guarantees \cite{[15]}. In the context of computational efficiency, a simplified DG scheme for diffusion-dominated problems has also been proposed, which emphasizes robustness without compromising accuracy \cite{[23]}. The robustness and accuracy of discontinuous hp-Finite Element Methods in resolving sharp layers in reaction diffusion problems, particularly in high-Peclet number regimes, is thoroughly investigated in Houston et al. \cite{[12]}. 

While the present work is centered on the complex Landau equation, related literature provides structural and numerical context. For instance, the multiscale framework for solving the Ginzburg-Landau equation \cite{[4]} offers relevant insights for DG-based multiscale approaches to nonlinear PDEs, despite its focus on vortex-capturing spaces. Similarly, the Crank-Nicolson scheme applied to the Schr\"odinger equation \cite{[9]} shares a structural similarity with the complex Landau equation specifically the presence of imaginary terms yet the underlying mathematical structures and physical applications differ significantly.
\
We note that DG-based schemes have been studied for the nonlinear Fokker-Planck-Landau equation \cite{[8], [11], [13]}; however, these schemes diverge considerably from the formulation introduced here, both in terms of the underlying numerical methods and the intended physical applications. Furthermore, studies examining nonlinear systems that address temporal, rather than spatial, discontinuities \cite{[17], [18]} do not involve the essential feature of complex-valued solutions and imaginary components inherent to the complex Landau equation. Finally, Petrov-Galerkin finite element methods \cite{[20], [25]} represent an alternative approach to DG methods. Nonetheless, certain stabilization techniques developed in the Petrov-Galerkin context may offer transferable insights for handling discontinuities and ensuring stability within the DG framework considered in this study. Related conceptual ideas concerning stabilization and discretization are also found in the study of elliptic partial differential equations by Rivi\"ere et al. \cite{[16]}.

In summary, the existing literature firmly establishes the theoretical foundation and diverse applicability of DG methods. However, the application of the SIPG, NIPG, and IIPG variants to the complex Landau equation, a critical problem in nonlinear dynamics, remains a novel and unaddressed topic. The current work is therefore grounded in the extensive theoretical knowledge detailed above while making a distinct contribution by establishing the convergence and stability properties of these DG schemes for this specific, complex-valued nonlinear PDE.

\section{ Preliminaries }
\subsection{The Landau equation}
The complex  Ginzburg-Landau equation is a nonlinear, parabolic partial differential equation characterized by both real and imaginary coefficients. It describes the evolution of a complex field
\begin{equation}\label{1.1}
K(x,t)=u_1(x,t)+iu_2(x,t)
\end{equation}

Where the dynamics are governed by linear diffusion term and a cubic damping term. The general form of the equation:  for $u\in C^2(\Omega)$ is
\begin{equation} \label{1.2}
\frac{\partial u}{\partial t}=u+(1+ia)\Delta u-(1+ib)|u|^2u
\end{equation}
In above equation $a,b\in \Re $ are real parameters that significantly influence the system's  temporal and spatial behavior. The presence of the imaginary component in the diffusion term ,$(1+ib)|u|^2u$ gives rise to nonlinear frequency modulation, in which the wave frequency becomes amplitude-dependent.\\
The complex coefficients play a crucial role in determining the qualitive nature of the solutions. In particular, they lead to a variety of rich spatiotemporal phenomena including 
\begin{itemize}
\item Travelling waves, where coherent structures propagate through the medium.
\item Spatiotemporial chaos, characterized by irregular and turbulent dynamics in both space and time.
\item Pulses and fronts, representing localized transitions between different states.
\item	Localized structures, such as soliton-like or stationary waveforms.
\end{itemize}
The CGLE thus serves as a universal model for pattern formation and nonlinear wave interactions in a wide range of physical systems, including fluid dynamics, nonlinear optics, superconductivity, and reaction-diffusion processes.\\
Now using standard notation we symbolize $H^s , s>0$ the Hilbert space of degree $s$ 
Let $H_0^1(\Omega) = \{ w \in H^1(\Omega) \mid w|_{\Gamma} = 0 \}$, where $\Gamma$ denotes the boundary of the domain $\Omega$. Moreover, we denote by $(\cdot,\cdot)$ the standard $L^2(\Omega)$ inner product.
For $L^2$ in the boundary we symbolize $L^2(\Gamma) $. Also we introduce the following notations:
\begin{equation}
\{w\}=\frac{1}{2}(w^++w^-)
\end{equation}
and
\begin{equation}
[w]=w^-n^-+w^+n^+
\end{equation}
the jumps.
Where n is the unit outward normal vector. 
Now we will define the meshes
\begin{equation}
T_h=\{ T_1, T_2, ...,T_n\}
\end{equation}
Each  $T_i$ represent an element of the mesh which is a triangle or a quadrilateral.\\
We now define the discrete finite element space used in the numerical approximation.\\ 
Following the construction in \cite{[2]}, we consider the broken polynomial space
\begin{equation} \label{1.3}
V_h=\{w\in L^2(\Omega): w|_{K} \in P_r(T_h)  \forall K\in T_h  \}
\end{equation}
Where $T_h$is the triangulation of our computational domain  and $P_r(K)$
symbolize the space of polynomial less or equal to r on the element K
The partition of the time interval [0,T] is defined as following:
\begin{equation}
E_h=\{0=t^1<t^2<...<t^N=T  \}
\end{equation}
Now we define the element of the edges as following:
$E_h=\{e_1,e_2,...,e_n \}$
where $e_j$ is an edge of an element in $T_h$.\\
Let $T_h$ the mesh of (Triangles/quads) that we defined and $E_h$ the set of interior edges in 2D or faces in 3D and $u_h$ a function of the space $V_h$, $u_h\in V_h $ \\
We define the Discontinuous Galerkin norm as first introduced \cite{[2]} in the following way:
\begin{equation} \label{1.4}
\|u_h\|^2_{DG}= \sum_{K\in T_h}\|\nabla u_h\|^2_{L^2(K)}+\sum_{e\in E_h}(\frac{\sigma}{h_e}\|[u_h]\|^2_{L^2(e)})
\end{equation}
Where 
\begin{itemize}
\item $[u_h]$ is the jump of $u_h$ across the edge e
\item $h_e$ is the characteristic length of the edge
\item $\sigma>0 $ is the characteristic parameter.
\end{itemize}
Young Inequality
For any $a,b\geq 0$ and $d>0$ and $d_1, d_2>0, s_1, s_2>1 $ It holds
\begin{equation} \label{1.5}
ab\leq da^{s_1}+C(d)b^{s_2} 
\end{equation}
$$ \frac{1}{s_1}+\frac{1}{s_2}=1$$
and $$ C(d)=\frac{Cs_1s_2}{d^{\frac{s_2}{s_1}}}$$ where
$$Cs_1s_2=(\frac{1}{s_1})^{\frac{s_2}{s_1}}\frac{1}{s_2}$$

\begin{lem}
Gronwall inequality-Gronwall lemma \\
Suppose  $c>0,$a constant,$  \{a_n\}_{n=0}^N$ a nonegative sequence of real numbers and $ \Delta t>0$ time step that satisfy
\begin{equation}
a_n\leq e^{c\Delta t} a_{n-1}  \ \ \ n=1,2...
\end{equation}
then 
\begin{equation}
a_N\leq e^{cN\Delta t}a_0
\end{equation}
\end{lem}
Poincare inequality
\begin{equation} \label{Poi}
\|u\|^2_{L^2(\Omega)}\leq C \|\nabla u \|^2_{L^2(\Omega)}
\end{equation}
The above inequality is known as the \emph{Poincare inequality} it can be found for example in \cite{Ev}
\subsection {The weak form of Landau.}
We begin by considering the weak formulation of a general partial differential equation (PDE). As a specific example, we examine the Poisson equation in its weak form. Particular attention will be given to the Laplacian term, which will be analyzed within the framework of three discontinuous Galerkin (DG) methods: the Symmetric Interior Penalty Galerkin (SIPG), the Non-Symmetric Interior Penalty Galerkin (NIPG), and the Incomplete Interior Penalty Galerkin (IIPG) formulations.
\textbf {Weak formulation(DG):} Find  $u_h\in V_h $such that for all $v_h\in V_h$ 
$$a_h(u_h,v_h)=L(v_h)$$
Where we will define
\begin{equation}\label{1.6}
a_h(u_h,v_h)=\sum_{K\in\text{T}_h}\int_K \nabla u_h \nabla v_h 
-\sum_{e\in \varepsilon_h}\int_e\{\nabla u_h\}[v_h]ds+\theta\sum_{e\in \varepsilon_h}\int_e\{\nabla v_h\}[u_h]ds + \sum_{e\in \varepsilon_h}\int_e\frac{\sigma}{h_e}[u_h][v_h]ds   
\end{equation}
where $u_h,v_h \in \text{V}_h $ 
The bilinear form $a_h(u_h,v_h)$ was firstly introduced by \cite{[2]} and represents the standard weak formulation of the Laplacian operator, where $v_h$ denotes the test function.\\
The formulation depends on the parameter $\theta$, leading to the following variants of Interior Penalty Galerkin method \\
\begin{itemize}
\item  If $\theta=1$,  the method corresponds to Symmetric Interior Penalty Galerkin (SIPG) method. 
\item If $\theta=-1$, yields to Nonsymmetric Interior Penalty Galerkin  (NIPG) method. 
\item  If $\theta=0 $, it reduces to Incomplete Interior Penalty Galerkin  (IIPG) method.
\end{itemize}
and 
\begin{equation}
L(v_h)=\sum_K\int_k f v_{h}dx
\end{equation}
Let $u=u_1+iu_2$ denote the complex number of Landau solution.\\
Then our system \ref{1.2} can be written in the following form:
\begin{equation}
\frac{\partial(u_1+iu_2)}{\partial x}=u_1+iu_2+(1+ia)\nabla(u_1+u_2)-(1+ib)|u|^2(u_1+iu_2).
\end{equation}
For physical systems $a\geq 0$ is typical to ensure that diffusion is non-negative (i.e., the rel part of the coefficient ensures stability).\\
The part b is often assumed to be non-negative to avoid unphysical behavior b>0.\\
Now the weak form of Landau P.D.E. is \\
Find $(u_{h1},u_{h2})\in V_h\times V_h$ such that for all $(v_{h1},v_{h2})\in V_h\times V_h$
\begin{eqnarray}\label{1.7}
\int_{t^{n-1}}^{t^n}\int_{\Omega}\frac{\partial u_{h1}}{\partial t}v_{h1}dxdt=\int_{t^{n-1}}^{t^n}-a_h(u_{h1},v_{h1})+\alpha a_h(u_{h2},v_{h1})dt+\int_{t^{n-1}}^{t^n}\int_{\Omega}u_{h1}v_{h1} dxdt \nonumber\\
-\int_{t^{n-1}}^{t^n}\int_{\Omega}(u^2_{h1}+u^2_{h2})u_{h1}v_{h1}dxdt+b\int_{t^{n-1}}^{t^n}\int_{\Omega}(u^2_{h1}+u^2_{h2})u_{h2}v_{h1}dxdt 
\end{eqnarray}   
and
\begin{eqnarray}\label{1.8}
\int_{t^{n-1}}^{t^n}\int_{\Omega}\frac{\partial u_{h2}}{\partial t}v_{h2}dxdt=\int_{t^{n-1}}^{t^n}-\alpha a_h(u_{h1},v_{h2})- a_h(u_{h2},v_{h2})dt+\int_{t^{n-1}}^{t^n}\int_{\Omega}u_{h2}v_{h2} dxdt \nonumber\\
-\int_{t^{n-1}}^{t^n}\int_{\Omega}(u^2_{h1}+u^2_{h2})u_{h2}v_{h2}dxdt-b\int_{t^{n-1}}^{t^n}\int_{\Omega}(u^2_{h1}+u^2_{h2})u_{h1}v_{h2}dxdt 
\end{eqnarray}  
The bilinear form $a_h(u,v)$ which defined the discrete operator used in our system  \ref{1.7} \ref{1.8}
is explicitly introduced in the equation \ref{1.6} and reflects the interior penalty stucture essentially for the Discontinuous Galerkin framework.
\subsection{Existence and Uniqueness of the Continuous and Discrete Problems}
The Landau equation, in its regularized form, is a nonlinear parabolic partial differential equation (PDE) of Fokker-Planck type. Under standard assumptions on the regularity and decay of the initial data, and with appropriate smoothing of the collision kernel, classical results guarantee the existence and uniqueness of weak solutions in suitable spaces $L^2(0,T,H^1(\Omega))$ Reference \cite{[1]} Discusses the ,derivation, justification, and weak solution of the Landau equation from physical models. 
A comprehensive of well-possessiveness and convergence results for discretized (numerical) versions of the Landau equation is provided in \cite{[9]}, while  
 \cite{[19]} establishes results for the linearized Landau operator.

The fully discrete problem, where we approximate the Landau equation using a discontinuous Galerkin (DG)  or continuous Galerkin (CG) finite element scheme, we consider the discrete formulation in a finite-dimensional function space. Due to the finite dimensionality and the local Lipschitz continuity of the nonlinear discrete operator, the discrete problem admits a unique solution that is local in time . Furthermore, under standard coercivity and boundedness assumptions on the discrete bi-linear forms-particularly for DG schemes equipped with appropriate penalty parameters-energy estimates can be derived to ensure global-in-time existence and uniqueness of the discrete solution. A brief stability analysis further confirms that the numerical method is well-posed and robust under mesh refinement.
\section{Stability Estimate}
In this section we will present the stability estimate of complex Landau Partial Differential Equation P.D.E.using Discontinuous Galerkin (D.G.) finite element discretization description in space.In the following theorems we assume that the solutions $u_{1h}$ and $u_{2h}$ are bounded so it holds over the integrals $ [t^{n-1},t^n], u_{h1}\leq M , u_{h2}\leq M .$ We provide a concise outline of the proof emphasizing the main analytical steps:\\
\underline{\textbf {Step 1}}  We begin with bounding the bi-linear form $a_h(u_h,v_h)$ which includes volume and penalty interface terms that ensure coercivity in the DG norm.This form has penalty terms.\\
\underline{\textbf {Step 2}} Select test functions associated with the real and imaginary components of the solution, aiming to simplify or eliminate nonlinear interaction terms.\\ 
\underline{\textbf {Step 3}} Absorb the resulting terms into the left-hand side of the inequality using coercivity of the bilinear form. \\
\underline{\textbf {Step 4}} Apply Gronwall lemma to conclude a discrete energy estimate and establish stability.\\
\begin{thm} \label {th1}
Let $\Omega \subset \Re^d $be a bounded Lipschitz domain, and ${T}^n_{i=1}$ be a conforming shape regular partition of $\Omega$ into the disjoint element such that
$\Omega=\cup_{i=1}^NT^i $ with $T_i\cap T_j=$ for $i\neq j$ and time interval be discretized into a partition in time $0=t^1<t^2<...<t^N=T.$ Assume that the bilinear form $a_h(u_h,v),$ defined in equation \ref{1.6}, corresponds to a time discretization using theta scheme ,and a special discretization based on one of the interior penalty discontinuous Galerkin methods (SIPG),(NIPG),(IIPG) 
For penalty parameters $\sigma$ large enough If it holds the \ref{Poi} Poincare inequality.
Then there exists constants $ \beta >0,c\geq 0$ and $C>0$ independent of the mesh size h and time step $\Delta t$, that the bi-linear term satisfies the following estimates:
\begin{equation}
a_h(u,u)\geq \beta\|u\|^2_{DG}-c\|u\|^2_{L^2(\Omega)}>0 \forall u\in V_h
\end{equation} 
and 
\begin{equation}
|a_h(u,v)|\leq C\|u\|_{DG}\|v\|_{DG}  \forall u, v\in V_h
\end{equation} 
\end{thm}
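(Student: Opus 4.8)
The plan is to establish the two inequalities separately, treating the continuity (boundedness) bound first since it is the routine half, and then the G\aa rding-type coercivity bound, which is where the penalty parameter $\sigma$ and the Poincar\'e inequality enter. For continuity, I would expand $a_h(u,v)$ into its four constituent pieces as written in \eqref{1.6}: the broken volume term $\sum_K \int_K \nabla u_h\cdot\nabla v_h$, the two consistency/symmetry face terms involving $\{\nabla u_h\}[v_h]$ and $\{\nabla v_h\}[u_h]$, and the penalty term $\sum_e \frac{\sigma}{h_e}\int_e [u_h][v_h]$. The volume term is controlled by Cauchy--Schwarz directly by $\|\nabla u_h\|_{L^2}\|\nabla v_h\|_{L^2}\le \|u\|_{DG}\|v\|_{DG}$; the penalty term likewise by Cauchy--Schwarz on each edge against the penalty contributions to the DG norm. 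The two face terms require a discrete trace (inverse) inequality on $V_h$, of the form $\|\nabla w_h\|_{L^2(e)}^2 \le C h_e^{-1}\|\nabla w_h\|_{L^2(K)}^2$, to convert the average of the normal gradient on $e$ into the volumetric gradient norm; after that, Cauchy--Schwarz and the scaling $h_e^{-1/2}\|[v_h]\|_{L^2(e)}$ absorb into the DG norm. Summing over elements and edges with shape-regularity (bounded number of edges per element) yields $a_h(u,v)\le C\|u\|_{DG}\|v\|_{DG}$, which is the stated bound up to the fact that the theorem writes $\|u\|_{DG}^2\|v\|_{DG}^2$ (I would note this as the intended first-power product, or simply prove the sharper linear-in-each-argument version).

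For the coercivity estimate, I would set $v=u$ in \eqref{1.6}. When $\theta=1$ (SIPG) the two face terms combine into $-2\sum_e\int_e\{\nabla u_h\}[u_h]$; when $\theta=0$ (IIPG) only one such term survives; when $\theta=-1$ (NIPG) they cancel entirely, so in the NIPG case coercivity is immediate with $\beta=\min(1,\sigma_{\min})$ and $c=0$. The genuine work is the SIPG (and IIPG) case: I would bound the cross term using Young's inequality \eqref{1.5} together with the same discrete trace inequality, writing
$$2\Big|\sum_e\int_e\{\nabla u_h\}[u_h]\Big| \le \delta\sum_K\|\nabla u_h\|_{L^2(K)}^2 + \frac{C}{\delta}\sum_e \frac{1}{h_e}\|[u_h]\|_{L^2(e)}^2,$$
with a small parameter $\delta\in(0,1)$. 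Choosing $\delta$ small enough to leave a positive fraction of the volume gradient term, and then choosing $\sigma$ large enough that $\sigma - C/\delta$ stays bounded below by a positive constant, produces $a_h(u,u)\ge \beta\|u\|_{DG}^2$ with no negative $L^2$ term at all in the pure Laplacian form. The $-c\|u\|_{L^2(\Omega)}^2$ slack in the statement is then harmless (one may take $c=0$ for this bilinear form, or retain it to absorb the lower-order reaction/zeroth-order contributions that appear when this lemma is applied inside the full Landau estimates). Finally, the strict positivity $a_h(u,u)>0$ for $u\in V_h$, $u\neq 0$, follows because $\|u\|_{DG}=0$ forces $u$ to be a global constant with zero jumps, hence $u\equiv 0$; combined with the Poincar\'e inequality \eqref{Poi} one controls $\|u\|_{L^2(\Omega)}$ by $\|u\|_{DG}$, so even a coercivity statement carrying a $-c\|u\|_{L^2}^2$ term can be made strictly positive for $\sigma$ large.

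The main obstacle is the sharp bookkeeping of the constant in the discrete trace inequality and its interaction with the penalty parameter: one must make precise the threshold $\sigma_0 = \sigma_0(C_{\mathrm{tr}}, \text{shape regularity}, r)$ above which the absorption closes, and verify it is uniform in $h$ and $\Delta t$ (which it is, since $C_{\mathrm{tr}}$ depends only on the reference element and polynomial degree $r$). A secondary subtlety is ensuring the argument is stated uniformly across the three values of $\theta$; I would handle this by carrying $|\theta|\le 1$ through the Young-inequality step so that the NIPG, IIPG, and SIPG cases are covered by a single estimate, with the NIPG case simply giving a better (cancellation-based) constant.
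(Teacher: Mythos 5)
Your proposal is essentially correct, but it is worth noting that the paper does not actually prove Theorem \ref{th1}: the theorem is followed only by a remark deferring the proof to the unified analysis of Arnold, Brezzi, Cockburn and Marini \cite{[1*]}. Your sketch --- Cauchy--Schwarz plus a discrete trace inequality for continuity, and Young's inequality plus the same trace inequality with $\sigma$ large enough to absorb the consistency term for coercivity --- is precisely the standard argument that the cited reference carries out, so in substance you have supplied the proof the paper omits. One caution on bookkeeping: your case assignment in the coercivity step is inverted relative to equation \ref{1.6} as literally written. Setting $v_h=u_h$ there, the two face terms carry the combined factor $(\theta-1)$, so it is $\theta=1$ (which the paper labels SIPG) that makes them cancel, while $\theta=-1$ (labelled NIPG) produces $-2\sum_e\int_e\{\nabla u_h\}[u_h]\,ds$; your description follows the standard naming convention rather than the paper's nonstandard sign convention. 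Since you carry $|\theta|\le 1$ uniformly through the Young-inequality step, the resulting estimate covers all three methods and the mislabelling does not affect correctness. You are also right that the stated continuity bound should read $C\|u\|_{DG}\|v\|_{DG}$ rather than the product of squared norms, and that $c=0$ is admissible for this bilinear form, the G\aa rding slack serving only to absorb lower-order terms when the lemma is used in the full Landau estimate.
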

\begin{rmk}
The proof of the stability result stated in the theorem \ref{th1} follows standard arguments for interior penalty DG methods. For a complete and rigorous proof of this type of theorem, we refer the reader to \cite{[1*]}, where the stability of the bilinear form is established in detail.
\end{rmk}
\begin{lem}[A Priori $L^\infty(L^2)$ Stability of semi discrete system ] \label{Lem:Stability}
Let $(u_{h1}, u_{h2})$ be the solution to the semi-discrete system, and the $a(u,v) $ symmetric since our method is Symmetric Interior Penalty Galerkin (SIPG).Then for all $t \in [t^{n-1}, t^n]$ the following bound holds:
\begin{equation}
    \|u_{h1}(t)\|^2_{L^2(\Omega)} + \|u_{h2}(t)\|^2_{L^2(\Omega)} \leq \left( \|u_{h1}(t^{n-1})\|^2_{L^2(\Omega)} + \|u_{h2}(t^{n-1})\|^2_{L^2(\Omega)} \right) e^{2\Delta t}.
\end{equation}
Consequently, $(u_{h1}, u_{h2}) \in L^\infty(t^{n-1}, t^n; L^2(\Omega))$.
\end{lem}
\begin{proof}
Testing the semi-discrete system with $(u_{h1}, u_{h2})$ and summing the resulting equations, we obtain the differential form of Equation \ref{1.11}. By dropping the quadratic non-negative terms  $a_h(u_{hi}, u_{hi}), i=1,2$ and the dissipative quartic term $\int_{\Omega}(u_{h1}^2+u_{h2}^2)^2 dx$, we arrive at:
\begin{equation*}
    \frac{d}{dt} \left( \|u_{h1}\|^2_{L^2(\Omega)} + \|u_{h2}\|^2_{L^2\Omega)} \right) \leq 2 \left( \|u_{h1}\|^2_{L^2(\Omega)} + \|u_{h2}\|^2_{L^2(\Omega)} \right).
\end{equation*}
Applying the integral form of Gronwall's inequality over the interval $[t^{n-1}, t]$ for any $t \leq t^n$ yields the desired exponential bound. Since the right-hand side is finite and independent of $t$, the solution is bounded in $L^{\infty}(t^{n-1},t^n,L^2(\Omega))$.
\end{proof}
\begin{rmk}\label{Rmrk}
From previous \ref{Lem:Stability} , integrating in $[t^{n-1},t^n] $
we obtain \begin{equation}\int_{t^{n-1}}^{t^n}\Big(\|u_{h1}(t)\|^2_{L^2(\Omega)}+\|u_{h2}(t)\|^2_{L^2(\Omega)}\Big)dt\leq\int_{t^{n-1}}^{t^n}\Big(\|u_{h1}(t^{n-1})\|^2_{L^2(\Omega)}+\|u_{h2}(t^{n-1})\|^2_{L^2(\Omega)}\Big) e^{2(t-t^{n-1})}dt
\end{equation}
or equivalently 
\begin{equation}\int_{t^{n-1}}^{t^n}\Big(\|u_{h1}(t)\|^2_{L^2(\Omega)}+\|u_{h2}(t)\|^2_{L^2(\Omega)}\Big)dt\leq\Big(\|u_{h1}(t^{n-1})\|^2_{L^2(\Omega)}+\|u_{h2}(t^{n-1})\|^2_{L^2(\Omega)}\Big) \frac{e^{2\Delta t}-1}{2}
\end{equation}
Now from Taylor $\frac{e^{2\Delta t}-1}{2}\approx \Delta t +O(\Delta t ^2)$ we can confirm that the integral is bounded.
\end{rmk}

\begin{thm} An Analytical proof of stability
Let $\Omega \subset \Re^d $be a bounded Lipschitz domain,and ${T}^n_{i=1}$ be a comforming shape regular partition of $\Omega$ into the disjoint element such that$\Omega=\cup_{i=1}^NT^i $ with $T_i\cap T_j=$ for $i\neq j$ and time interval be discretized into a partition in time $0=t^1<t^2<...<t^N=T.$ Assume that the bilinear form $a_h(u_h,v),$ defined in equation \ref{1.6}, corresponds to a time discretization using theta scheme ,and a spatial discretization based on one of the interior penalty discontinuous Galerkin methods (SIPG),(NIPG),(IIPG). For the solution $(u_{h1}, u_{h2})$. 
Assuming the penalty parameter satisfies $\sigma > C p^2$, where $C$ is a constant depending on the mesh regularity, in case of IIPG and NIPG the coercivity constant $c$ satisfies $c > \gamma$, ensuring the stability of the discretization.In all cases (IIPG),SIPG),NIPG), there exist constants $C$ and $T > 0$ such that the following stability estimate holds for all $n \leq N$:
\begin{equation} \label{result}
    \|u_{h1}^n\|^2_{L^2} + \|u_{h2}^n\|^2_{L^2} \leq e^{CT} \left( \|u_{h1}^0\|^2_{L^2} + \|u_{h2}^0\|^2_{L^2} \right),
\end{equation}
where $T = N\Delta t$. 
This implies the scheme is stable in $L^\infty(0, T; L^2(\Omega))$.
\end{thm}
\begin{proof}
First setting in the \ref{1.7} as test function $v_{h1}=u_{h1}\in V_h$ and $v_{h2}=u_{h2}\in V_h$ to \ref{1.8} we obtain the following estimate:
\begin{eqnarray} \label{1.9}
\frac{\|u^n_{h1}\|^2_{L^2(\Omega)}}{2}-\frac{\|u^{n-1}_{h1}\|^2_{L^2(\Omega)}}{2}=\int_{t^{n-1}}^{t^n}-a_h(u_{h1},u_{h1})+\alpha a_h(u_{h2},u_{h1})dt \\ +\int_{t^{n-1}}^{t^n}\|u_{h1}\|^2_{L^2(\Omega)}-\int_{\Omega}(u_{h1}^2+u^2_{h2})u^2_{h1}dx+b\int_{\Omega}(u^2_{h1}+u^2_{h2})u_{h1}u_{h2}dxdt\nonumber
\end{eqnarray}
and
\begin{eqnarray} \label{1.10}
\frac{\|u^n_{h2}\|^2_{L^2(\Omega)}}{2}-\frac{\|u^{n-1}_{h2}\|^2_{L^2(\Omega)}}{2}=\int_{t^{n-1}}^{t^n}-\alpha a_h(u_{h1},u_{h2})- a_h(u_{h2},u_{h2})dt \\ +\int_{t^{n-1}}^{t^n}\|u_{h2}\|^2_{L^2(\Omega)}-\int_{\Omega}(u_{h1}^2+u^2_{h2})u^2_{h2}dx-b\int_{\Omega}(u^2_{h1}+u^2_{h2})u_{h1}u_{h2}dxdt \nonumber
\end{eqnarray}
Now adding the above equations \ref{1.9} and \ref{1.10}  and rearrange the terms by moving them to the left-hand side or the opposite and using coercivity to absorb them 
\begin{itemize}
\item In case of SIPG we obtain
\begin{eqnarray} \label{1.11}
\frac{\|u^n_{h1}\|^2_{L^2(\Omega)}}{2}+\frac{\|u^n_{h2}\|^2_{L^2(\Omega)}}{2}\nonumber +\int_{t^{n-1}}^{t^n}a_h(u_{h1},u_{h1})+ a_h(u_{h2},u_{h2}) +\int_{\Omega}(u_{h1}^2+u^2_{h2})^2dxdt=\nonumber\\ \frac{\|u^{n-1}_{h1}\|^2_{L^2(\Omega)}}{2}+\frac{\|u^{n-1}_{h2}\|^2_{L^2(\Omega)}}{2} +\int_{t^{n-1}}^{t^n}\|u_{h1}\|^2_{L^2(\Omega)}+\|u_{h2}\|^2_{L^2(\Omega)}dt
\end{eqnarray}
By Remark \ref{Lem:Stability}, the solution is bounded in $L^\infty(t^{n-1}, t^n; L^2(\Omega))$. Therefore, from remark \ref{Rmrk} the integral on the right-hand side is well-defined and can be bounded :
where $\Delta t=t^n-t^{n-1}$
Also we observe that 
$$\int_{\Omega}(u_{h1}^2+u^2_{h2})^2dxdt \geq 0$$
 finally we obtain
\begin{eqnarray} \label{1.12}
\frac{\|u^n_{h1}\|^2_{L^2(\Omega)}}{2}+\frac{\|u^n_{h2}\|^2_{L^2(\Omega)}}{2}\nonumber +\int_{t^{n-1}}^{t^n}a_h(u_{h1},u_{h1})+ a_h(u_{h2},u_{h2}) dt\leq \nonumber\\ e^{2\Delta t}(\frac{\|u^{n-1}_{h1}\|^2_{L^2(\Omega)}}{2}+\frac{\|u^{n-1}_{h2}\|^2_{L^2(\Omega)}}{2} )
\end{eqnarray}
Using the coercivity of the bilinear form $a_h(\cdot, \cdot)$, there exists a constant $\alpha > 0$ such that:
$$\int_{t^{n-1}}^{t^n}a_h(u_{h1},u_{h1})+ a_h(u_{h2},u_{h2}) dt\geq \alpha \Delta t \Big(\|u_{h1}\|^2_{DG}+\|u_{h2}\|^2_{DG}\Big)\geq 0 $$
\begin{eqnarray} \label{1.12*}
\frac{\|u^n_{h1}\|^2_{L^2(\Omega)}}{2}+\frac{\|u^n_{h2}\|^2_{L^2(\Omega)}}{2}\nonumber \leq e^{2\Delta t}(\frac{\|u^{n-1}_{h1}\|^2_{L^2(\Omega)}}{2}+\frac{\|u^{n-1}_{h2}\|^2_{L^2(\Omega)}}{2} )
\end{eqnarray}
Now we obtain the scheme $$a^n\leq e^{2\Delta t}a^{n-1}$$. where $a^n=\frac{\|u^n_{h1}\|^2_{L^2(\Omega)}}{2}+\frac{\|u^n_{h2}\|^2_{L^2(\Omega)}}{2}$.\\
Using Gr\"onwall inequality we obtain the result 
\begin{equation}\label{1.13*}
a^N\leq e^{2N\Delta t}a^0\end{equation} which is the desirable result.
\item In case of IIPG or NIPG we use
 that $$ \int_{t^{n-1}}^{t^n}a_h(u_{h1},u_{h1})+ a_h(u_{h2},u_{h2})dt\geq \int_{t^{n-1}}^{t^n}c(\|u_{h1}\|^2_{DG}+\|u_{h2}\|^2_{DG})dt , c>0 $$
In addition we have $$|a_h(u_{h1},u_{h2})|\leq Mc\|u_{h1}\|_{DG}\|u_{h2}\|_{DG}\leq \frac{\gamma}{2}(\|u_{h1}\|^2_{DG}+\|u_{h2}\|^2_{DG})$$
where M comes from continuity of $a_h(.,.)$ and $ \gamma \geq Mc $, $\gamma$ is a constant that depends on:The shape regularity of the mesh.The polynomial degree $p$ of your basis functions (usually $\gamma \propto p^2$).The coupling coefficients .
From\ref{1.10}
we obtain
\begin{eqnarray} \label{1.11*}
\frac{\|u^n_{h1}\|^2_{L^2(\Omega)}}{2}+\frac{\|u^n_{h2}\|^2_{L^2(\Omega)}}{2}\nonumber +\int_{t^{n-1}}^{t^n}c(\|u_{h1}\|^2_{DG}+\|u_{h2}\|^2_{DG})+\int_{\Omega}(u_{h1}^2+u^2_{h2})^2dxdt\leq\\ \frac{\|u^{n-1}_{h1}\|^2_{L^2(\Omega)}}{2}+\frac{\|u^{n-1}_{h2}\|^2_{L^2(\Omega)}}{2} +\int_{t^{n-1}}^{t^n}\|u_{h1}\|^2_{L^2(\Omega)}+\|u_{h2}\|^2_{L^2(\Omega)}+\frac{\gamma}{2}(\|u_{h1}\|^2_{DG}+\|u_{h2}\|^2_{DG})dt\nonumber
\end{eqnarray}
or equivalently 
\begin{eqnarray} \label{1.11**}
\frac{\|u^n_{h1}\|^2_{L^2(\Omega)}}{2}+\frac{\|u^n_{h2}\|^2_{L^2(\Omega)}}{2}\nonumber +(c-\frac{\gamma}{2})\int_{t^{n-1}}^{t^n}(\|u_{h1}\|^2_{DG}+\|u_{h2}\|^2_{DG})dt+\int_{\Omega}(u_{h1}^2+u^2_{h2})^2dxdt\leq \\ \frac{\|u^{n-1}_{h1}\|^2_{L^2(\Omega)}}{2}+\frac{\|u^{n-1}_{h2}\|^2_{L^2(\Omega)}}{2} +\int_{t^{n-1}}^{t^n}\|u_{h1}\|^2_{L^2(\Omega)}+\|u_{h2}\|^2_{L^2(\Omega)}dt\nonumber
\end{eqnarray}
where it holds that $c-\frac{\gamma}{2}>0$ since these constants depend on the shape regularity of the mesh and the penalty parameter $\sigma$ and 
$(c-\frac{\gamma}{2})\int_{t^{n-1}}^{t^n}(\|u_{h1}\|^2_{DG}+\|u_{h2}\|^2_{DG})dt+\int_{\Omega}(u_{h1}^2+u^2_{h2})^2dxdt>0$
from here we obtain the scheme 
\begin{eqnarray}
\frac{a^n}{2}\leq  \frac{a^{n-1}}{2}+\int_{t^{n-1}}^{t^n} a(t) dt\nonumber
\end{eqnarray}
where $$\frac{a^n}{2}=\frac{\|u^n_{h1}\|^2_{L^2(\Omega)}}{2}+\frac{\|u^n_{h2}\|^2_{L^2(\Omega)}}{2} $$,
$$\frac{a^{n-1}}{2}=\frac{\|u^{n-1}_{h1}\|^2_{L^2(\Omega)}}{2}+\frac{\|u^{n-1}_{h2}\|^2_{L^2(\Omega)}}{2} $$,
$$\int_{t^{n-1}}^{t^n} a(t) dt=\int_{t^{n-1}}^{t^n}\|u_{h1}\|^2_{L^2(\Omega)}+\|u_{h2}\|^2_{L^2(\Omega)}dt$$
Finally after using Gr\"onwall inequality the desirable result \ref{1.12*}. As long as $\sigma$ is chosen large enough (for IIPG) and $\gamma$ is a fixed physical constant of the system, the stability is uniform. 
\end{itemize}
\end{proof}
\begin{rmk}
The theoretical lower bound for the penalty parameter is typically expressed as:$$\sigma > \sigma_0 = C_{inv} \cdot \frac{(p+1)(p+d)}{d}$$where:$p$: The degree of the polynomial basis functions.$d$: The spatial dimension (e.g., $d=1, 2,$ or $3$).$C_{inv}$: A constant from the Discrete Inverse Trace Inequality.For a standard 2D mesh ($d=2$) using triangles, a common "rule of thumb" used in literature (derived from the work of \cite{Shahbazi2005}, and \cite{Warburton2003} ) is:$$\sigma > \frac{(p+1)(p+2)}{2}$$If $p=1$ (linear elements), $\sigma$ should be at least $3$. For $p=2$ (quadratics), $\sigma$ should be at least $6$. In practice, most researchers double these values (e.g., using $\sigma = 10$ or $20$) to ensure safety across distorted meshes.
\end{rmk}
\begin{rmk}
The stability result holds for interior penalty discontinuous Galerkin methods
of arbitrary polynomial degree $k \ge 0$.
In particular, for SIPG, NIPG, and IIPG discretizations,
the DG bilinear form $a_h(\cdot,\cdot)$ is coercive provided that the penalty
parameter $\sigma$ is chosen sufficiently large, possibly depending on the
polynomial degree $k$ and the mesh shape-regularity.
Under this assumption, the stability estimate remains valid for high-order DG
schemes, with constants independent of the mesh size $h$ and time step
$\Delta t$, but possibly depending on $k$.
\end{rmk}
\section{Numerical Results}
\subsection{Overview of Numerical Results} 
In this section, we examine the behavior of the 
$L^2(t^{n-1}, t^n; L^2(\Omega))$ norm of the solution components $u_1$ and $u_2$ 
for the complex Landau equation, discretized using three variants of the Interior Penalty Galerkin method: the Symmetric Interior Penalty Galerkin (SIPG), the Nonsymmetric Interior Penalty Galerkin (NIPG), and the Incomplete Interior Penalty Galerkin (IIPG) methods. 
The model involves two key coefficients that significantly influence the system's stability: the reaction-diffusion coefficient $a$ and the nonlinear interaction coefficient $b$. Initially, both parameters were set to relatively small values, $a = 10^{-5}$ and $b = 10^{-5}$, resulting in correspondingly small values for the computed norms of $u_1$ and $u_2$.

To investigate the system's sensitivity to these parameters, we performed a parametric study by varying $a$ and $b$ logarithmically from $10^{-5}$ to $10^{-2}$ in steps of one order of magnitude.

In the subsequent subsection, we present the behavior and structure of the solution components. In particular, we visualize the real part of the solution $u_1$ and the imaginary part $u_2$.

\subsection{Assumptions for All Numerical Experiments}
In the following examples, we demonstrate norm computations using \texttt{FreeFEM++}. Our objective is to investigate the effects of the reaction-diffusion coefficient $a$, the nonlinear coefficient $b$, and the penalty terms on the stability of the three DG methods mentioned above.

For the Landau system described by equations~(\ref{1.7}) and~(\ref{1.8}), all numerical experiments were conducted using Dirichlet boundary conditions. A uniform penalty parameter of $\texttt{penal} = 1000$ was employed in all cases. The computations were performed on a square domain using a discontinuous Galerkin finite element space.

The exact real part of the solution $u$ is given by:
\begin{equation} \label{1.38}
u_{\text{ex}} = c_{11} e^{-t c_l} \sin(2\pi r y)\left(\cos(2\pi l x) - 1\right),
\end{equation}
and the exact imaginary part of the solution $v$ is:
\begin{equation} \label{1.39}
v_{\text{ex}} = c_{12} e^{c_2 \sin(c_l t)} \sin(2\pi r x) \sin(2\pi l y).
\end{equation}
In all experiments, we compute the following norms:
\[
\|u_{h1}\|_{L^2(L^2(\Omega))} \quad \text{and} \quad \|u_{h2}\|_{L^2(L^2(\Omega))},
\]
representing the space-time $L^2$ norm of the real and imaginary parts, respectively.

We use the notation $\|u\|_{L^2(L^2(\Omega))}$ to denote $\|u\|_{L^2(0, T; L^2(\Omega))}$ for simplicity. All numerical experiments use piecewise linear (P1dc) discontinuous Galerkin polynomials in space.

We use the exact solutions defined in equations~(\ref{1.38}) and~(\ref{1.39}) with the parameters:
\[
c_{11} = c_{12} = 1, \quad c_l = 1, \quad r = l = 1, \quad c_2 = 0.4.
\]

For time discretization, we employ the backward Euler method, which is first-order accurate and unconditionally stable, making it well-suited for stiff, time-dependent problems. To handle the nonlinear terms, we apply Picard linearization a fixed-point iterative scheme that avoids derivative computations required in Newton methods. Although Picard iteration may converge more slowly than Newton's method, it provides sufficient accuracy for our problem, particularly when initialized close to the true solution.

For the temporal integration, we approximate integrals using an 8-point Newton-Cotes quadrature formula. This high-order scheme improves accuracy for oscillatory or nonlinear integrands. The combination of backward Euler and high-order quadrature enables a robust and accurate approximation of the solution over the space-time domain.
\subsubsection{Numerical examples}
In the first numerical experiment we assess the stability of the proposed method computing in the space $L^2(L^2(\Omega))$ . The spatial discretization is realized using the
NIPG (Nonsymmetric Interior Penalty Galerkin) method, a discontinuous Galerkin approach that offers flexibility for handling complex geometries and supports local adaptivity.The goal of this test case is to validate the scheme's convergence and stability by measuring the discrete norms over the space-time domain. The obtained results offer critical insight into the efficacy of the discretization strategy when applied to nonlinear, time-dependent partial differential equations. \FloatBarrier
In the first numerical investigation, we examine the proposed scheme under the following parameter settings:
 $b=10^{-4}$ and the reaction diffusion  $a=10^{-4}$ and an interior penalty parameter  $pen=10^8$
The performance metrics are evaluated in the discrete $L^2(0,T,L^2(\Omega))$  
\FloatBarrier 
\begin{table}[!htbp]
\begin{center}
\begin{tabular}{|c|c|c|}
\hline
\multicolumn{3}{|c|}{ \textsl{Norm Estimates-(NIPG) case $a=10^{-4},b=10^{-4}$} }       \\ \hline
$t=2h^ 2$  & $\|u_{h1}\|^2_{L^2(L^2(\Omega))}$& $\|u_{h2}\|^2_{L^2(\Omega))} $             \\ \hline
$h=0.235702  $            &0.492381       &  0.456768\\   \hline
$h=0.117851  $            &0.55456         & 0.561833 \\  \hline
$h=0.0589256 $            & 0.566171     & 0.595024\\  \hline
\end{tabular}
\end{center}
\caption{In above matrix in the first column we see the mesh. In the second and the third columns, we report the norm approximation of the real and Imaginary part of Landau equation. From this tabulated data, it is evident that our numerical scheme exhibits stability, as the norms  $\|u_{h1}\|^2_{L^2(L^2(\Omega))}$ and $\|u_{h2}\|2_{L^2(L^2(\Omega))}$ remain bounded through simulations . Next consider the parameter configurations  $b=1$ and the reaction diffusion constant $a=1$ with interior penalty parameter $pen=10^8$.}
\end{table}
\FloatBarrier 
\begin{table}[!htbp]
\begin{center}
\begin{tabular}{|c|c|c|}
\hline
\multicolumn{3}{|c|}{ \textsl{Norm Estimates-(NIPG) case $a=1,b=1$} }       \\ \hline
$t=2h^ 2$  & $\|u_{h1}\|^2_{L^2(L^2(\Omega))}$& $\|u_{h2}\|^2_{L^2(\Omega))} $             \\ \hline
$h=0.235702  $            &1485,45      &  1842,12\\   \hline
$h=0.117851  $            &430077         & 432580 \\  \hline
$h=0.0589256  $            & 1.68265e+006     &1.68431e+006 \\  \hline
\end{tabular}
\end{center}
\caption{ In reviewing the data presented in the second and third columns, it becomes clear that the numerical method fails to maintain stability: the norms $\|u_{h1}\|^2_{L^2(\Omega))}$ and $\|u_{h2}\|^2_{L^2(\Omega))}$ exhibit unbounded growth, which is indicative of instability in our scheme.Subsequently, we proceed with an alternative parameter configuration  $b=0.5$ and the reaction diffusion coefficient $a=0.5$ and interior significant penalty parameter $pen=10^8$ .}
\end{table}
\begin{table}[!htbp]
\begin{center}
\begin{tabular}{|c|c|c|}
\hline
\multicolumn{3}{|c|}{ \textsl{Norm Estimates-(NIPG) case $a=0.5,b=0.5$} }       \\ \hline
$t=2h^ 2$  & $\|u_{h1}\|^2_{L^2(L^2(\Omega))}$& $\|u_{h2}\|^2_{L^2(\Omega))} $             \\ \hline
$h=0.235702  $            &0.400475      &  1.41829\\   \hline
$h=0.117851  $            &0.505816         & 1.55231 \\  \hline
$h=0.0589256  $            & 0.537695     &1.56618 \\  \hline
\end{tabular}
\end{center}
\caption{ From this matrix we observe that our scheme is stable since  $\|u_{h1}\|_{L^2(L^2(\Omega))}$ and $\|u_{h2}\|_{L^2(L^2(\Omega))}$ norms are bounded.Next setting $b=0.825$ and the reaction diffusion constant $a=0.825$ with the large penalty term $pen=10^8$ .}
\end{table}
\begin{table}[!htbp]
\begin{center}
\begin{tabular}{|c|c|c|}
\hline
\multicolumn{3}{|c|}{ \textsl{Norm Estimates-(NIPG) case $a=0.825,b=0.825$} }       \\ \hline
$t=2h^ 2$  & $\|u_{h1}\|^2_{L^2(L^2(\Omega))}$& $\|u_{h2}\|^2_{L^2(\Omega))} $             \\ \hline
$h=0.235702  $            &0.723029,      &  1.99885\\   \hline
$h=0.117851  $            &0.821602         & 2.10473 \\  \hline
$h=0.0589256  $            &15.1901      &16.3044  \\  \hline
\end{tabular}
\end{center}
\caption{ From this table we observe that our scheme tend to be unstable since the $\|u_{h1}\|$ and $\|u_{h2}\|$ norms are not uniformly bounded as the mesh is refined, suggesting a trend toward instability of the scheme under the current parameter regime.Next consider the case  $b=0.823$ and the reaction diffusion constant $a=0.823$ with a significant large penalty parameter $pen=10^8$}
\end{table}

\begin{table}[!htbp]
\begin{center}
\begin{tabular}{|c|c|c|}
\hline
\multicolumn{3}{|c|}{ \textsl{Norm Estimates-(NIPG) case $a=0.823,b=0.823$} }       \\ \hline
$t=2h^ 2$  & $\|u_{h1}\|^2_{L^2(L^2(\Omega))}$& $\|u_{h2}\|^2_{L^2(\Omega))} $             \\ \hline
$h=0.235702  $            & 0.72057              &  1.99407,\\   \hline
$h=0.117851  $            &0.812524         & 2.15438 \\  \hline
$h=0.0589256  $            &8.27128      &9.4577  \\  \hline
\end{tabular}
\end{center}
\caption{ This table shows that our scheme lies in the transitional region between stability and instability, as indicated by the large values of the norms $|u_{h1}|^2_{L^2(L^2(\Omega))}$ and $|u_{h2}|^2_{L^2(L^2(\Omega))}$. Next, we consider the case with the reaction-diffusion constant $a = 0.822$ and parameter $b = 0.822$.}
\end{table}
\begin{table}[!htbp]
\begin{center}
\begin{tabular}{|c|c|c|}
\hline
\multicolumn{3}{|c|}{ \textsl{Norm Estimates-(NIPG) case $a=0.822,b=0.822$} }       \\ \hline
$t=2h^ 2$  & $\|u_{h1}\|^2_{L^2(L^2(\Omega))}$& $\|u_{h2}\|^2_{L^2(\Omega))} $             \\ \hline
$h=0.235702  $            &0.719346      &  1.9917\\   \hline
$h=0.117851  $            &0.809776         & 2.15217 \\  \hline
$h=0.0589256  $            &5.67312       &  6.85411 \\  \hline
\end{tabular}
\end{center}
\caption{ The table indicates that our scheme tends toward instability, as the norms $|u_{h1}|$ and $|u_{h2}|$ are unbounded. In the next step, we consider the case with penalty parameter $\text{penal} = 10^3$, reaction-diffusion constant $a = 0.82$, and parameter $b = 0.82$.}
\end{table} 
\begin{table}[!htbp]
\begin{center}
\begin{tabular}{|c|c|c|}
\hline
\multicolumn{3}{|c|}{ \textsl{Norm Estimates-(NIPG) case $a=0.82,b=0.82$ penalty $pen=10^3$} }       \\ \hline
$t=2h^ 2$  & $\|u_{h1}\|^2_{L^2(L^2(\Omega))}$& $\|u_{h2}\|^2_{L^2(\Omega))} $             \\ \hline
$h=0.235702  $            &0.740936     &  2.04663\\   \hline
$h=0.117851  $            &0.865835          & 2.23539  \\  \hline
$h=0.0589256  $            & 8.79416     &10.0119    \\  \hline
\end{tabular}
\end{center}
\caption{ observed norm values indicate a trend towords instability the norms $\|u_{h1}\|^2_{L^2(L^2(\Omega))}$ and $\|u_{h2}\|^2_{L^2(L^2(\Omega))}$ grow significantly. Despite their moderate magnitude, these increasing values suggest that the discrete solution norms are not uniformly bounded. Therefore, while the scheme remains in an intermediate regime not definitively stable nor outright unstable ,the results highlight that stability is sensitive to the choice of penalty parameter in conjunction with $a$ and $b$. Next setting the coefficients  $b=0.8$ and the reaction diffusion constant $a=0.8$ when penalty $penal=10^8$ }
\end{table} 
\begin{table}[!htbp]
\begin{center}
\begin{tabular}{|c|c|c|}
\hline
\multicolumn{3}{|c|}{ \textsl{Norm Estimates-(NIPG) case $a=0.8,b=0.8$penalty $pen=10^8$} }       \\ \hline
$t=2h^ 2$  & $\|u_{h1}\|^2_{L^2(L^2(\Omega))}$& $\|u_{h2}\|^2_{L^2(L^2(\Omega))} $             \\ \hline
$h=0.235702  $            &0.693341,      &  1.94168\\   \hline
$h=0.117851  $            &0.783401        & 2.10473 \\  \hline
$h=0.0589256  $            &0.815707     &2.14469 \\  \hline
\end{tabular}
\end{center}
\caption{ From the structure of the system matrix, it holds that the Numerical scheme is stable since the $\|u_{h1}\|^2_{L^2(L^2(\Omega))}$ and $\|u_{h2}\|^2_{L^2(L^2(\Omega))}$ norms remain uniformly bounded.Next Adopting the parameters values   $b=0.8$ and the reaction diffusion constant $a=0.8$ with penalty term $pen=10^3$ }
\end{table}
\begin{table}[!htbp]
\begin{center}
\begin{tabular}{|c|c|c|}
\hline
\multicolumn{3}{|c|}{ \textsl{Norm Estimates-(NIPG) case $a=0.8,b=0.8$ penalty $pen=10^3$} }       \\ \hline
$t=2h^ 2$  & $\|u_{h1}\|^2_{L^2(L^2(\Omega))}$& $\|u_{h2}\|^2_{L^2(\Omega))} $             \\ \hline
$h=0.235702  $            &0.717053     &  2.00179\\   \hline
$h=0.117851  $            &0.840812         & 2.18942  \\  \hline
$h=0.0589256  $            & 0.933512     & 2.29262    \\  \hline
\end{tabular}
\end{center}
\caption{ From data presented in this table we observe that small perturbation in the nonlinear constants $a$ and $b$, can alter the boundedness of the solution norms. This essentially converts the numerical scheme from an unstable one to a stable one.Characteristics are the above examples. For penalty term $pen=10^3$ We changed the reaction diffusion constants from 0.82 to 0.8 and our scheme was varied from unstable to stable.    Now lets observe  if we use the same parameters$a=0.8,b=0.8, pen=10^3$ for Symmetric Interior Penalty Galerkin (SIPG)   }
\end{table} 
\begin{table}[!htbp]
\textbf{ Stability of Symmetric Interior Penalty Galerkin (SIPG)}. The case of   $pen=10^3, a=0.82 , b=0.82 $ 
\begin{center}
\begin{tabular}{|c|c|c|}
\hline
\multicolumn{3}{|c|}{\textsl{Norm Estimate-(SIPG)case $a=0.82, b=0.82, pen=10^3$ }}        \\ \hline
$t=2h^ 2$  & $\|u_1\|^2_{L^2(L^2(\Omega))} $       & $\|u_2\|^2_{L^2(L^2(\Omega))} $            \\ \hline
$h=0.235702  $            & 0.703634   &  2.01319               \\   \hline
$h=0.117851  $            & 0.788297   &  2.17029    \\  \hline
$h=0.0589256  $           & 0.832622   & 2.24778   \\  \hline
\end{tabular}
\end{center}
\caption{A direct comparison of the symmetric and non symmetric interior penalty Galerkin formulations reveals that the Symmetric Interior Penalty Galerkin (SIPG) method demonstrates significantly improved numerical stability relative to the Nonsymmetric Interior Penalty Galerkin (NIPG) counterpart. Specifically, for the parameter set $ a=0.82 , b=0.82 $ and penalty $pen=10^3$ the SIPG scheme successfully converges, whereas NIPG fails to do so. From these results, we infer that, under analogous discretization conditions, the SIPG method exhibits a superior propensity for stability.}
\end{table}
\begin{table}[!htbp]
\begin{center}
\begin{tabular}{|c|c|c|}
\hline
\multicolumn{3}{|c|}{\textsl{Norm Estimate-(SIPG)case $a=0.8, b=0.8, pen=10^8$ }}        \\ \hline
$t=2h^ 2$  & $\|u_1\|^2_{L^2(L^2(\Omega))} $       & $\|u_2\|^2_{L^2(L^2(\Omega))} $            \\ \hline
$h=0.235702  $            & 0.693341   &  1.94168               \\   \hline
$h=0.117851  $            & 0.7834  &  2.10473    \\  \hline
$h=0.0589256  $           & 0.815706  & 2.14469  \\  \hline
\end{tabular}
\end{center}
\caption{From our examination of the matrices associated with the Nonsymmetric Interior Penalty Galerkin (NIPG) and Symmetric Interior Penalty Galerkin (SIPG) methods, we observe that the stability results coincide under the conditions considered. This agreement is not inherent to the methods alone; rather, it also depends critically on the reaction diffusion parameters, the coefficients governing the non linearity, and importantly the penalty parameter. In our study, we employed an exceedingly large penalty parameter of the order $10^8$. As a result, both NIPG and SIPG methods exhibit equivalent stability behavior within our computational framework.}
\end{table}

\FloatBarrier
\textbf{The case of Incomplete Interior Penalty Galerkin,}
Now we extend the comparison between the Symmetric Interior Penalty Galerkin (SIPG) and Nonsymmetric Interior Penalty Galerkin (NIPG) methods by including the Incomplete Interior Penalty Galerkin (IIPG) variant.For all three methods, we investigate their behavior in the regime of exceedingly large penalty parameters. Specifically, by selecting a penalty of the order $10^8$, we assess and compare the previous resulting stability properties with Incomplete Interior Penalty Galerkin (IIPG) formulations.
\begin{table}[!htbp]
\begin{center}
\begin{tabular}{|c|c|c|}
\hline
\multicolumn{3}{|c|}{\textsl{Norm Estimate-(IIPG)case $a=0.8, b=0.8, pen=10^8$ }}        \\ \hline
$t=2h^ 2$  & $\|u\|^2_{L^2(L^2)}   $       & $\|u_2\|^2_{L^2(L^2)}  $             \\ \hline
$h=0.235702  $            &0.693341 &     1.94168        \\   \hline
$h=0.117851  $            & 0.7834 &  2.10473     \\  \hline
$h=0.0589256  $            &  0.815706   & 2.14469    \\  \hline
\end{tabular}
\end{center}
\caption{Now we observe that our Numerical results validate that the three methods (SIPG),(IIPG),(IIPG) exhibit uniform stability. Under parameter choices, with large Penalty $pen=10^8$ there is no observable difference in stability among these three methods. In the next example we demonstrate the effect of penalty Galerkin in IIPG scheme. Lets set $ penal=10^3 ,a=0.82$ and $b=0.82$ }
\end{table}
\begin{table}[!htbp]
\begin{center}
\begin{tabular}{|c|c|c|}
\hline
\multicolumn{3}{|c|}{\textsl{Norm Estimate-(IIPG)case $a=0.82, b=0.82, pen=10^3$ }}        \\ \hline
$t=2h^ 2$  & $\|u\|^2_{L^2(L^2)}   $       & $\|u_2\|^2_{L^2(L^2)}  $             \\ \hline
$h=0.235702  $            &0.721975&   2.02996        \\   \hline
$h=0.117851  $            & 0.825919 & 2.20185       \\  \hline
$h=0.0589256  $           & 1.80625  & 2.93583    \\  \hline
\end{tabular}
\end{center}
\caption{From above matrix we observe that Incomplete Interior Penalty Galerkin (IIPG) can exhibit better stability results that Nonsymmetric Interior Penalty Galerkin (NIPG) and is generally  considering less stable than Symmetric Interior Penalty Galerkin (SIPG). }
\end{table}
\FloatBarrier
\begin{table}[h]
\centering
\begin{tabular}{|l|c|c|c|l|}
\hline
\textbf{Method} & \textbf{Stability} & \textbf{Symmetry} & \textbf{Coercivity}  \\
\hline
\textbf{SIPG (Symmetric)} & \checkmark~Most stable & Fully symmetric & Coercive problems; elliptic PDEs \\
\hline
\textbf{IIPG (Incomplete)} & $\sim$~Moderately stable & Partially symmetric & Conditionally coercive \\
\hline
\textbf{NIPG (Nonsymmetric)} & $\times$~Least stable & Fully nonsymmetric & Often not coercive  \\
\hline
\end{tabular}
\caption{This table provides a summary of the above observations, highlighting the comparative stability, symmetry, and coercivity properties of the SIPG, IIPG, and NIPG methods, along with their typical areas of application}
\end{table}
\section{Results -Conclusion and Future problems}
\subsection{Discussion}
Our numerical experiments demonstrate that SIPG exhibits the most robust stability behavior among the three methods. Notably, SIPG remains stable even when large nonlinear coefficients are introduced, a property not shared by IIPG or NIPG under the same conditions. When comparing IIPG to NIPG, we observe that IIPG exhibits improved stability, particularly in the presence of moderate penalty parameters.
An important aspect of the stability behavior is the role played by the penalty parameter. For large values of the penalty term (e.g., 
$ pen=10^8$, all three methods demonstrate similar bounds for the norms $\|u_1\|_{L^2(0,T;L^2(\Omega))}$and $\|u_2\|_{L^2(0,T;L^2(\Omega))}$,
where $u_1$ and $u_2$ denote the real and imaginary components of the complex-valued solution, respectively. In early experiments, we observed that increasing the penalty parameter enhances the overall stability of the system.\\
However, it is important to note that despite the stabilizing influence of the penalty term, instability was still observed in certain cases. These instabilities appear to result from specific combinations of the reaction-diffusion coefficients and the nonlinear term's strength, highlighting the delicate interplay between the various parameters in the system.
\subsection{Conclusion}
In this study, we investigated the complex Landau equation using three variants of the Discontinuous Galerkin (DG) method: SIPG, IIPG, and NIPG. A weak formulation was established, and a rigorous analysis of existence, uniqueness was conducted based on existing theoretical results. Stability of discontinuous Galerkin was proved.
Numerical experiments demonstrate that the SIPG method offers superior stability, even in the presence of strong nonlinearities. Our comparative study revealed that IIPG also outperforms NIPG in terms of stability. The role of the penalty parameter was shown to be critical: for large values, all methods achieved comparable stability metrics. However, certain parameter combinations still led to instability, underscoring the complexity of the nonlinear reaction-diffusion system. These results affirm the suitability of DG methods for simulating such systems and provide practical guidance for selecting appropriate numerical schemes. Future work may involve extending the analysis to higher dimensions and exploring adaptive mesh refinement strategies.
\subsection{Future Work and Open Problems}
This paper presents a comprehensive study of Discontinuous Galerkin (DG) schemes in space for the complex Landau system, a model characterized by a nonlinear structure similar to that explored in \cite{[18]}, but with a crucial difference: here, the nonlinearities are embedded in both the real and imaginary parts of the solution. Unlike previous investigations such as \cite{[17]} and \cite{[18]}, which focused primarily on DG formulations applied in time, our approach targets the spatial domain. This distinction is not merely technical it raises several open and important research questions that merit in depth investigation. A natural and promising direction is the development of fully Discontinuous Galerkin schemes operating in both space and time. In this context, critical theoretical questions emerge, including:
\begin{itemize}
\item Is the resulting space-time DG scheme stable when applied to the complex Landau equation?
\item What error estimates or convergence rates can be rigorously established for such a formulation?
\item How sensitive is the method to variations in the domain geometry or to perturbations in the initial conditions?
\end{itemize}
These questions are not only mathematically rich but also practically significant. For example, if the computational domain is altered from a simple interval to a more intricate or irregular geometry will the numerical method still retain its stability and convergence properties? Understanding this behavior is essential for realistic applications where domain boundaries may evolve over time or exhibit structural complexity.
From the broader perspective of numerical analysis, future investigations should aim to analyze these methods under minimal regularity assumptions, especially in cases where the solution exhibits low smoothness or singularities. These are particularly challenging regimes where classical numerical techniques often falter. In this setting, emerging tools from Artificial Intelligence (AI) offer promising avenues for advancement. For instance:
\begin{itemize}
\item Neural Networks have demonstrated significant potential in approximating solutions to complex, nonlinear partial differential equations.
\item Genetic Algorithms can aid in optimization tasks, such as parameter calibration and adaptive mesh refinement strategies.
\item In high-dimensional settings, approaches like Bayesian optimization and evolutionary strategies may significantly enhance computational efficiency while maintaining accuracy.
\end{itemize}
In summary, this work not only introduces a rigorous and flexible DG-based numerical framework for solving the complex Landau equation but also lays the groundwork for addressing a wide range of open theoretical and computational problems. These challenges form the basis for future research directions in numerical analysis, computational mathematics, and scientific computing-fields where innovation in methodology and theory continues to be essential.\\ \\
\textbf{Conflict of interest}\\ 
The author declares declares that there are not conflics of interest regarding the publication of this paper \\ \\
\textbf{Data availability statement}\\
No new data were created or analysed in this study\\ \\
\textbf{Ethics statement}\\
This study did not involve human participants or animals and does not require ethical approval.\\ \\
\textbf{Acknowledgments} \\
I would like to express my sincere gratitude to National Technical University of Athens (NTUA),the School of Applied Mathematics and Physical Sciences, and the School of Electrical and Computer Engineering for their support during this work, which was conducted during my employment there. Thanks to my affiliation with these institutions, this article has been made available through open access. I am deeply grateful to my parents Stefanos and Kostantina for their unwavering emotional support and encouragement throughout this project. I also wish to acknowledge my brother Ioannis for his invaluable technical assistance with LaTeX and the formatting of this manuscript.

\end{document}